\numberwithin{equation}{section}
\newtheorem{theorem}{Theorem}[section]
\newtheorem{lemma}[theorem]{Lemma}
\newtheorem{proposition}[theorem]{Proposition}
\theoremstyle{definition}
\newtheorem{definition}[theorem]{Definition}
\theoremstyle{remark}
\newcommand{\N} {\mathbb N}
\newcommand{\R} {\mathbb R}
\newcommand{\T} {\mathbb T}
\newcommand{\cA}{\mathcal A}
\newcommand{\cC}{\mathcal C}
\newcommand{\cK}{\mathcal K}
\newcommand{\cM}{\mathcal M}
\newcommand{\dist}{\operatorname{dist}}
\newcommand{\inner}{\operatorname{int}}
\newcommand{\defref}[1]{Definition~\ref{#1}}
\newcommand{\propref}[1]{Proposition~\ref{#1}}
\newcommand{\secref}[1]{Section~\ref{#1}}
\newcommand{\theoref}[1]{Theorem~\ref{#1}}
\newcommand{\fa}{\quad \text{for all }\,}
\newcommand{\mwith}{\text{ with }}
\newcommand{\qandq}{\quad \text{and} \quad }
\newcommand{\qorq}{\quad \text{or} \quad }
\newcommand{\es}{\emptyset}
\newcommand{\sm}{\setminus}
\newcommand{\tm}{\times}
\newcommand{\eps}{\varepsilon}
\newcommand{\opintb}[1]{\big(#1\big)}
\newcommand{\set}[1]{\{#1\}}
\newcommand{\setb}[1]{\big\{#1\big\}}
\newcommand{\setB}[1]{\Big\{#1\Big\}}
\newcommand{\norm}[1]{\|#1\|}
\title[Bifurcations of set-valued dynamical systems]% end with percent
 {Topological bifurcations\\ of minimal invariant sets\\ for set-valued dynamical systems} % This is the full title of the paper
\author[J.S.W.~Lamb, M.~Rasmussen and C.S.~Rodrigues]{Jeroen S.W.~Lamb, Martin Rasmussen and Christian S.~Rodrigues}
\subjclass[2010]{37G35, 37H20, 37C70, 49K21 (primary), 37B25, 34A60 (secondary)}
\thanks{The first author was supported by a FAPESP-Brazil Visiting Professorship (2009-18338-2). The first and second author gratefully acknowledge partial support by EU IRSES project DynEurBraz and the warm hospitality of IMECC UNICAMP during the development of this paper. The second author was supported by an EPSRC Career Acceleration Fellowship and a Marie Curie Intra European Fellowship of the European Community.\\
We are grateful to Janosch Rieger for many useful comments on a preliminary version of this paper.}
\begin{document}

\maketitle

\date{\today}

\begin{abstract}
  We discuss the dependence of set-valued dynamical systems on parameters. Under mild assumptions which are naturally satisfied for random dynamical systems with bounded noise and control systems, we establish the fact that topological bifurcations of minimal invariant sets are discontinuous with respect to the Hausdorff metric, taking the form of lower semi-continuous explosions and instantaneous appearances. We also characterise these transitions by properties of Morse-like decompositions.
\end{abstract}

\section{Introduction}

Dynamical systems usually refer to time evolutions of states, where each initial condition leads to a unique state of the system in the future.
Set-valued dynamical systems allow a multi-valued future, motivated, for instance, by impreciseness or uncertainty. In particular, set-valued
dynamical systems naturally arise in the context of random and control systems.

The main motivation for the work in this paper is the study of random dynamical systems represented by a mapping $f:\R^d\to \R^d$ with a bounded noise of size $\eps>0$,
\begin{equation}\label{eqn_1}
  x_{n+1} = f(x_n) + \xi_n\,,
\end{equation}
where the sequence $(\xi_n)_{n\in\N}$ is a uniformly distributed random variable with values in $\overline{B_\eps(0)}:= \set{x\in \R^d: \norm{x}\le \eps}$. The collective behavior of all
future
trajectories is then represented by a set-valued mapping $F:\cK(\R^d)\to\cK(\R^d)$, defined by
\begin{displaymath}
  F(M):= \overline{B_\eps(f(M))} \fa M \in \cK(\R^d)\,,
\end{displaymath}
where $\cK(\R^d)$ is the set of all nonempty compact subsets of $\R^d$.

Under the natural assumption that the probability distribution on $\overline{B_\eps(0)}$ has a non-vanishing Lebesgue density, it turns out that the
supports of stationary measures of the random dynamical system are minimal invariant sets of the set-valued mapping~$F$ \cite{Araujo_00_1,Zmarrou_07_1}. A
minimal invariant set is a compact set $M\subset \R^d$ that is invariant (i.e.~$F(M)=M$) and contains no proper invariant subset.

In this paper, we are mainly interested in topological bifurcations of minimal invariant sets, while considering a parameterized family of set-valued mappings
$(F_\lambda)_{\lambda \in \Lambda}$, where $\Lambda$ is a metric space. These bifurcations involve discontinuous changes as well as disappearances of minimal invariant
sets under variation of $\lambda$.

\begin{definition}[Topological bifurcation of minimal invariant sets]\label{def_1}
  Let $(F_\lambda)_{\lambda \in \Lambda}$ be a continuously parameterized family of set-valued mappings on $\R^d$ such that $F_\lambda(x)$ contains a ball for all $\lambda\in\Lambda$ and $x\in\R^d$, and let $\cM_\lambda$ denote the union of minimal invariant sets of $F_\lambda$, $\lambda\in\Lambda$. We say that $F_{\lambda}$ admits a \emph{topological bifurcation
  of minimal invariant sets} at $\lambda= \lambda_\ast$ if for any neighbourhood $V$ of $\lambda_\ast$, there does not exist a family of homeomorphisms $(h_\lambda)_{\lambda\in V}$,
  $h_\lambda:\R^d\to \R^d$, depending continuously on $\lambda$, with the property that $h_\lambda (\cM_\lambda) = \cM_{\lambda_\ast}$ for all $\lambda\in V$.
\end{definition}

The main result concerns the necessity of discontinuous changes of minimal invariant sets at bifurcation points with two possible local scenarios.

\begin{theorem}\label{theo_3}
  Suppose that the family $(F_\lambda)_{\lambda\in \Lambda}$ admits a bifurcation at $\lambda=\lambda_\ast$. Then a minimal invariant set changes discontinuously at
  $\lambda=\lambda_\ast$ in one of the following ways:
  \begin{itemize}
    \item[(i)] it explodes lower semi-continuously at $\lambda_\ast$, or \item[(ii)] it disappears instantaneously at $\lambda_\ast$.
  \end{itemize}
\end{theorem}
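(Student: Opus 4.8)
The plan is to argue by contraposition. Assume that no minimal invariant set of $F_{\lambda_\ast}$ explodes lower semi-continuously at $\lambda_\ast$ and that none disappears instantaneously at $\lambda_\ast$; I will then produce a neighbourhood $V$ of $\lambda_\ast$ and a continuous family of homeomorphisms $(h_\lambda)_{\lambda\in V}$ of $\R^d$ with $h_\lambda(\cM_\lambda)=\cM_{\lambda_\ast}$ for all $\lambda\in V$, so that by \defref{def_1} the system $F_{\lambda_\ast}$ does not admit a bifurcation.

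First I would use the structural results of the preceding sections to turn the two excluded scenarios into concrete regularity. Excluding instantaneous disappearances (and appearances) forbids minimal invariant sets of $F_\lambda$ from vanishing or emerging at $\lambda_\ast$, which yields a neighbourhood $V$ of $\lambda_\ast$ on which the number of minimal invariant sets is some fixed $m$ together with a continuous labelling $\cM_\lambda=M^1_\lambda\cup\dots\cup M^m_\lambda$, each $M^k_\lambda$ being the continuation of $M^k_{\lambda_\ast}$. Once this is in place, the only remaining way for $\lambda\mapsto M^k_\lambda$ to be Hausdorff-discontinuous is a lower semi-continuous explosion, so excluding (i) upgrades the labelling to \emph{continuity} of each $\lambda\mapsto M^k_\lambda$ in the Hausdorff metric. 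Shrinking $V$, we may assume these properties hold throughout $V$ and that the $M^k_\lambda$ are pairwise disjoint with a uniform positive gap, so that each persists inside its own open neighbourhood $U^k\supset M^k_{\lambda_\ast}$ with $\overline{U^1},\dots,\overline{U^m}$ pairwise disjoint.

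The construction of $(h_\lambda)$ then reduces to a single minimal invariant set: it suffices to build, for each $k$, a continuous family of homeomorphisms $g^k_\lambda$ of $\R^d$ that equal the identity outside $U^k$ and at $\lambda=\lambda_\ast$ and satisfy $g^k_\lambda(M^k_\lambda)=M^k_{\lambda_\ast}$; one then puts $h_\lambda:=g^1_\lambda\circ\dots\circ g^m_\lambda$ and verifies $h_\lambda(\cM_\lambda)=\cM_{\lambda_\ast}$ directly, continuity in $\lambda$ being inherited. To build $g^k_\lambda$ I would invoke the standing assumptions on the family together with the earlier analysis: in the absence of an explosion the minimal invariant sets remain regular closed sets whose boundaries move without degenerating, and the Morse-like decomposition picture shows that the Hausdorff-continuous motion of such a set is realised by an ambient isotopy $\Phi^k_\lambda$ of $\R^d$, equal to the identity outside $U^k$, with $\Phi^k_{\lambda_\ast}=\mathrm{id}$ and $\Phi^k_\lambda(M^k_{\lambda_\ast})=M^k_\lambda$; then $g^k_\lambda:=(\Phi^k_\lambda)^{-1}$ works. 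This contradicts $F_{\lambda_\ast}$ admitting a bifurcation.

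I expect the crux to be precisely this last step: upgrading Hausdorff-continuity of the family of minimal invariant sets to an \emph{ambient} isotopy. A Hausdorff-continuous family of compact sets need not be ambient-isotopic at all --- a circle shrinking to a point is the standard warning --- so the dynamical input is indispensable: ruling out (i) is what prevents the boundaries of the minimal sets from collapsing or colliding, ruling out (ii) is what keeps the components uniformly apart, and it is only in conjunction with the regularity built into the standing assumptions that these yield the isotopy. A secondary point that needs care is that \defref{def_1} and the hypotheses concern $\lambda_\ast$ alone, so one must check that the tameness extracted at $\lambda_\ast$ persists on a full neighbourhood $V$ (equivalently, that no further bifurcation accumulates at $\lambda_\ast$); this is arranged by shrinking $V$ after the structural results have been applied.
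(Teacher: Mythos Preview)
Your approach is genuinely different from the paper's, and it contains a gap you have correctly identified yourself.

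The paper does not prove \theoref{theo_3} by contraposition. It treats \theoref{theo_3} as an informal summary of the technical \theoref{theo_1}, which establishes a dichotomy valid for \emph{every} parameter value $\lambda_\infty$, bifurcation or not: for any sequence $\lambda_n\to\lambda_\infty$ and any minimal $\Phi_{\lambda_\infty}$-invariant set $M_{\lambda_\infty}$, after passing to a subsequence one has either $M_{\lambda_\infty}\subset\liminf_k\bigl(\cM_{\lambda_{n_k}}\cap B_\delta(M_{\lambda_\infty})\bigr)$ or $\limsup_k\bigl(\cM_{\lambda_{n_k}}\cap B_\delta(M_{\lambda_\infty})\bigr)=\es$. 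The proof is a direct case split on whether $\cM_{\lambda_n}\cap M_{\lambda_\infty}$ is nonempty: in the nonempty case one uses (H5)--(H6) and a pigeonhole/covering argument to show that $\liminf_k M_{\lambda_{n_k}}$ is forward invariant and intersects $M_{\lambda_\infty}$, hence contains it by minimality; in the empty case one uses upper semi-continuity to show the nearby $\cM_{\lambda_{n_k}}$ stay uniformly away. No homeomorphisms are ever constructed; the point is that these two alternatives exhaust the possible local behaviour, so any discontinuity must take one of these two forms.

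Your route, by contrast, would require building an ambient isotopy of $\R^d$ realising the Hausdorff-continuous motion of each minimal set. You rightly flag this as the crux, and it is a genuine gap: nothing in the standing hypotheses (H1)--(H6) ensures that a Hausdorff-continuous family of minimal invariant sets is carried by an ambient isotopy, and the Morse-like decomposition you invoke does not supply one. Even for regular closed sets in $\R^d$, Hausdorff-continuity of boundaries does not yield ambient isotopy without substantial additional regularity that is nowhere assumed. The paper sidesteps this difficulty entirely by proving the forward dichotomy rather than the contrapositive; indeed, the precise link between \defref{def_1} and the dichotomy of \theoref{theo_1} is asserted only informally (see the paragraph preceding \theoref{theo_2}) and is never established via a homeomorphism construction.
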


A more technical formulation of this result with the precise assumptions can be found in \theoref{theo_1}. In fact, the setting of set-valued dynamical systems in this paper is slightly more general than presented above and includes also continuous-time systems.

Another focus of this paper lies in extending Morse decomposition theory to study bifurcation problems in our context. Recently, Morse decompositions
have been discussed for set-valued dynamical systems \cite{Braga_Unpub_2,Li_07_1,McGehee_92_1}, and we generalize certain fundamental results for
attractors and repellers to complementary invariant sets. The second main result of this paper (\theoref{theo_2}) asserts that at a bifurcation
point, these complementary invariant sets must touch.

In the context of the presented motivation above, we note that the study of random dynamical systems with bounded noise can be separated into a
topological part (involving the mapping $F$) and the evolution of measures. In contrast, the topological part is redundant in the case of unbounded
noise (modelled for instance by Brownian motions), where there is only one minimal invariant set, given by the whole space and supporting a unique
stationary measure.

Initial research on bifurcations in random dynamical systems with unbounded noise started in the 1980s, mainly by Ludwig Arnold and co-workers
\cite{Arnold_98_1,Baxendale_94_1,%Arnold_96_1,
Johnson_02_1}. Two types of bifurcation have been distinguished so far: the \emph{phenomenological
bifurcation} (P-bifurcation), concerning qualitative changes in stationary densities, and the \emph{dynamical bifurcation} (D-bifurcation),
concerning the sign change of a Lyapunov exponent, cf.~also \cite{Ashwin_99_1}. To a large extent, however, bifurcations in random dynamical systems remain unexplored.

In modelling, bounded noise is often approximated by unbounded noise with highly localized densities in order to enable the use of stochastic
analysis. In this approximation, topological tools to identify bifurcations are inaccessible, leaving the manifestation of a topological bifurcation
as a cumbersome quantitative and qualitative change of properties of invariant measures.

Our work contributes to the abstract theory of set-valued dynamical systems dating back to the 1960s. Early contributions were motivated mainly by
control theory \cite{Roxin_65_2,Kloeden_78_1}, and later developments include stability and attractor theory
\cite{
%Akin_93_1,
Araujo_00_1,
Gruene_01_1,Gruene_02_1,Kloeden_11_3,McGehee_92_1,Roxin_97_1}, Morse decompositions
\cite{Braga_Unpub_2,Li_07_1,McGehee_92_1} and ergodic theory \cite{Artstein_00_1} %Miller_99_1}.

Our results build upon initial piloting studies concerning bifurcations in random dynamical systems with bounded noise
\cite{Botts_Unpub_1,Colonius_08_3,Colonius_10_1,Homburg_06_1,Homburg_10_1,Zmarrou_07_1,Zmarrou_08_1} and control systems
\cite{Colonius_03_1,Colonius_08_2,Colonius_09_1,Gayer_04_1,Gayer_05_1}. In particular, \theoref{theo_3} unifies and generalises observations in \cite{Botts_Unpub_1,Homburg_06_1,Zmarrou_07_1} to higher dimensions and non-invertible (set-valued) systems, while the bifurcation analysis in terms of Morse-like decompositions is a novel perspective.

We finally remark that set-valued dynamical systems appear in the literature also as \emph{closed relations}, \emph{general dynamical systems},
\emph{dispersive systems} or \emph{families of semi-groups}.

\section{Set-valued dynamical systems}

Throughout this paper, we consider the phase space of a set-valued dynamical system to be a compact metric space $(X,d)$. We restrict to the setting of a compact phase space throughout the paper, although some of our results extend naturally to noncompact complete phase spaces.

Let $B_\eps(x_0) = \setb{x\in X: d(x,x_0)<\eps}$ denote the $\eps$-neighbourhood of a point $x_0 \in X$. For arbitrary nonempty sets $A,B\subset X$ and $x\in X$, let
$\dist(x,A) := \inf \setb{d(x,y) : y\in A}$ be the \emph{distance} of $x$ to $A$ and $\dist(A,B) := \sup \setb{\dist(x,B): x\in A}$ be the \emph{Hausdorff semi-distance}
of $A$ and $B$. The \emph{Hausdorff distance} of $A$ and $B$ is then defined by $h(A,B):= \max\setb{\dist(A,B), \dist(B,A)}$.

The set of all nonempty compact subsets of $X$ will be denoted by $\cK(X)$. Equipped with the Hausdorff distance $h$, $\cK(X)$ is also a metric space $(\cK(X), h)$. It is well-known that if $X$ is complete or compact, then $\cK(X)$ is also complete or compact, respectively.
Define for a sequence $(M_n)_{n\in\N}$ of bounded subsets of $X$,
\begin{align*}
  \limsup_{n\to\infty}  M_n &:=\setB{ x \in X: \liminf_{n\to\infty} \dist(x,M_n) =0 } \quad\mbox{ and }\\
  \liminf_{n\to\infty}  M_n &:= \setB{ x \in X: \limsup_{n\to\infty} \dist(x,M_n) =0 }
\end{align*}
(see %\cite[p.~125--126]{Akin_93_1} and
\cite[Definition~1.1.1]{Aubin_90_1}).

In this paper, a \emph{set-valued dynamical system} is understood as a mapping $\Phi: \T \tm X \to \cK(X)$ with time set $\T=\N_0$ (discrete) or $\T=\R_0^+$ (continuous), which fulfills $\Phi(1, X) = X$ and the following properties:
\begin{itemize}
  \item[(H1)] $\Phi$ is \emph{continuous}.
  \item[(H2)] $\Phi(0,\xi) = \set{\xi}$ for all $\xi\in X$.
  \item[(H3)] $\Phi(t+\tau, \xi) = \Phi(t,\Phi(\tau,\xi))$ for all $t,\tau \ge 0$ and $\xi\in X$.
\end{itemize}
Note that in (H2), the extension $\Phi(t,M):=\bigcup_{x\in M} \Phi(t,x)$ for $M\subset X$ was used.

There is a one-to-one correspondence between discrete set-valued dynamical systems and continuous mappings $f:X\to \cK(X)$. On the other
hand, set-valued dynamical systems arise in the context of differential inclusions, which canonically generalize ordinary differential equations to multi-valued vector fields \cite{Aubin_84_1,Deimling_92_1}. Note that the $\eps$-perturbation of a discrete mapping as discussed in the Introduction yields a set-valued dynamical system with \emph{continuous} dependence on $x$.

Associated to every set-valued dynamical system is a so-called dual system.

\begin{definition}[Dual of a set-valued dynamical system]
  Let $\Phi:\T\tm X \to \cK(X)$ be a set-valued dynamical system. Then the \emph{dual} of $\Phi$ is defined by $\Phi^*:\T\tm X \to\cK(X)$, where
  \begin{displaymath}
    \Phi^*(t,\xi) :=\setb{x\in X: \xi\in \Phi(t,x)}\fa (t,\xi)\in \T\tm X\,.
  \end{displaymath}
\end{definition}

Note that in case of an invertible (single-valued) dynamical system, $\Phi^*$ coincides with the system under time reversal.

To see that $\Phi^*$ is well-defined, i.e.~$\Phi^*(t,\xi)\in\cK(X)$, consider for given $(t,\xi)\in \T\tm X$ a sequence $(x_n)_{n\in\N}$ in $\Phi^*(t,\xi)$ converging to $x\in X$. This means that $\xi\in \Phi(t,x_n)$ for all $n\in\N$, and hence, $\xi\in \lim_{n\to\infty}
\Phi(t,x_n)=\Phi(t,x)$ by continuity of $\Phi$. Thus, $x\in \Phi^*(t,\xi)$, which proves that this set belongs to $\cK(X)$. Note that $\Phi(1,X)=X$ implies that the images of $\Phi^*$ are non-empty.

The dual $\Phi^*$ was introduced already in \cite{McGehee_92_1} without formalising its properties. The following proposition shows that indeed $\Phi^*$ fulfills the initial value condition (H2) and the group property (H3), but it can be shown that $\Phi^*$ can be discontinuous.

\begin{proposition}
  The dual system $\Phi^*$ fulfills (H2)--(H3).
\end{proposition}

\begin{proof}
  (H2) One has $\Phi^*(0,\xi) = \setb{x\in X: \xi\in \Phi(0,x)} = \setb{x\in X: \xi\in \set{x}} = \set{\xi}$ for all $\xi\in X$.
  \\
  (H3) It follows that {\allowdisplaybreaks \begin{align*}
    & \Phi^*(t+\tau,\xi)  = \setb{x\in X: \xi\in \Phi(t+\tau,x)}= \setb{x\in X: \xi\in \Phi(\tau, \Phi(t ,x))}
    \\
    &= \setb{x\in X: \exists \,y\in \Phi(t ,x): \xi\in \Phi(\tau, y)} = \setb{x\in X: \exists \,y\in \Phi(t ,x): y\in \Phi^*(\tau, \xi)}
    \\
    &= \setb{x\in X: \Phi(t ,x) \cap \Phi^*(\tau, \xi) \not= \es} = \setb{x\in X: \exists \,y\in \Phi^*(\tau, \xi) : y\in \Phi(t ,x) }
    \\
    &= \setb{x\in X: \exists \,y\in \Phi^*(\tau, \xi) : x\in \Phi^*(t ,y) } = \setb{x\in X: x\in \Phi^*(t ,\Phi^*(\tau, \xi)) }
    \\
    &= \Phi^*(t ,\Phi^*(\tau, \xi))\,.
  \end{align*}}
  This finishes the proof of this proposition.
\end{proof}

\section{Minimal invariant sets}

In the following, the focus lies on the determination and bifurcation of so-called \emph{minimal invariant sets} of a set-valued dynamical system $\Phi$. Given a set-valued dynamical system $\Phi:\T\tm X\to \cK(X)$, a nonempty and compact set $M\subset X$ is called $\Phi$-\emph{invariant} if
\begin{displaymath}
  \Phi(t,M) = M \fa t\ge0\,.
\end{displaymath}
A $\Phi$-invariant set is called \emph{minimal} if it does not contain a proper $\Phi$-invariant set.

Minimal $\Phi$-invariant sets are pairwise disjoint, and under the assumption that there exists an $\eps>0$ and $T>0$ such that $\Phi(T,x)$ contains at least an $\eps$-ball for all $x\in X$, there are only finitely many of such sets, since $X$ is compact.

Minimal $\Phi$-invariant sets are important, because they are exactly the supports of stationary measures of a random dynamical system, whenever $\Phi$ describes the topological part of the random system (see \cite{Homburg_06_1,Homburg_10_1,Botts_Unpub_1} in the continuous case of random differential equations, and \cite{Zmarrou_07_1} for random maps). Moreover, in case $\Phi$ describes a control system, minimal $\Phi$-invariant sets coincide with invariant control sets (see the monograph \cite{Colonius_00_1}).

\begin{proposition}\label{prop_3}
  Let $\Phi:\T\tm X\to \cK(X)$ be a set-valued dynamical system and let $M\subset X$ be compact with $\Phi(t,M) \subset M$ for all $t\ge0$,  and suppose that no proper subset of $M$ fulfills this property. Then $M$ is $\Phi$-invariant.
\end{proposition}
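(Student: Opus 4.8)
The plan is to show that for each $t_0 \ge 0$ the set $M_0 := \Phi(t_0,M)$ is a nonempty compact subset of $M$ that again satisfies $\Phi(t,M_0) \subset M_0$ for all $t \ge 0$; the minimality hypothesis on $M$ then forbids $M_0$ from being a proper subset, forcing $\Phi(t_0,M) = M$, which is exactly $\Phi$-invariance.

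First I would record the elementary properties of $M_0 := \Phi(t_0,M)$. It is nonempty, since $M$ is nonempty and $\Phi(t_0,x) \in \cK(X)$ is a nonempty set for each $x$; and $M_0 \subset M$ holds by hypothesis. The one point requiring care is that $M_0$ is compact, which is needed precisely so that $M_0$ qualifies as a competitor in the minimality hypothesis. For this I would use the fixed-time specialisation of the upper semi-continuity (H3), namely that $x \mapsto \Phi(t_0,x)$ is upper semi-continuous with compact values on the compact space $X$: if $y_n \in M_0$ with $y_n \to y$, pick $x_n \in M$ with $y_n \in \Phi(t_0,x_n)$, pass to a subsequence with $x_n \to x \in M$ (using compactness of $M$), and then $y \in \Phi(t_0,x) \subset M_0$ follows from upper semi-continuity; hence $M_0$ is closed, and being a subset of the compact set $M$, it is compact.

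The key step is verifying that $M_0$ is forward-contracting, i.e. $\Phi(t,M_0) \subset M_0$ for all $t \ge 0$. Here I would use the semigroup property (H2), extended to subsets via $\Phi(t,N) := \bigcup_{x \in N}\Phi(t,x)$ as noted after (H2) — an extension that is visibly monotone in $N$ — to compute, for arbitrary $t \ge 0$,
\[
  \Phi(t,M_0) = \Phi(t,\Phi(t_0,M)) = \Phi(t+t_0,M) = \Phi(t_0,\Phi(t,M)) \subset \Phi(t_0,M) = M_0,
\]
where the inclusion uses the hypothesis $\Phi(t,M) \subset M$ together with the monotonicity of $\Phi(t_0,\cdot)$.

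Combining the two steps, $M_0$ is a nonempty compact subset of $M$ satisfying $\Phi(t,M_0) \subset M_0$ for all $t \ge 0$, so by assumption it is not a proper subset of $M$; thus $\Phi(t_0,M) = M$, and since $t_0 \ge 0$ was arbitrary, $M$ is $\Phi$-invariant. I expect the compactness of the image $\Phi(t_0,M)$ to be the only genuine obstacle — it is the sole place where (H3) is used — while the rest is routine manipulation of (H1)–(H2) and set-theoretic monotonicity.
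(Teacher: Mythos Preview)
Your argument is correct, and it takes a genuinely different route from the paper's proof. The paper instead considers the $\omega$-limit set
\[
  \limsup_{t\to\infty}\Phi(t,M)=\bigcap_{t\ge 0}\overline{\bigcup_{s\ge t}\Phi(s,M)},
\]
cites a standard result (from \cite{Aubin_90_1}) that this set is nonempty, compact and $\Phi$-invariant, observes that it lies in $M$, and then invokes the minimality hypothesis to conclude it equals $M$. By contrast, you work directly with the time-$t_0$ image $\Phi(t_0,M)$ and use only the semigroup property (H2) and monotonicity to show it is forward-contracting; no $\omega$-limit machinery is needed. Your approach is therefore more self-contained and elementary, while the paper's is terser at the cost of invoking an external fact. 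The compactness check for $\Phi(t_0,M)$ via (H3) that you flag as the one nontrivial point is indeed the only place where a genuine argument is required, and your proof of it is fine; note that the paper's approach also needs compactness (of the $\omega$-limit set), but gets it for free from the intersection-of-nested-compacta representation.
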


\begin{proof}
  Standard arguments lead to the fact that the $\omega$-limit set
  \begin{displaymath}
    \bigcap_{t\ge0}\overline{\bigcup_{s\ge t} \Phi(s,M)} = \bigcap_{t\ge0}\Phi(t,M)
  \end{displaymath}
  is a nonempty and compact $\Phi$-invariant set \cite{Aubin_90_1}. Since $\bigcap_{t\ge0}\Phi(t,M) \subset M$, it follows that this set coincides with $M$.
\end{proof}

The existence of minimal $\Phi$-invariant sets follows from Zorn's Lemma.

\begin{proposition}[Existence of minimal invariant sets]\label{prop_1}
  Let $\Phi:\T\tm X\to \cK(X)$ be a set-valued dynamical system and $M\subset X$ be compact such that
  $\Phi(t,M) \subset M$ for all $t\ge0$.
  Then there exists at least one subset of $M$ which is minimal $\Phi$-invariant.
\end{proposition}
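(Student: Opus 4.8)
The plan is to apply Zorn's Lemma to the poset of nonempty compact forward‑invariant subsets of $M$, ordered by reverse inclusion, and then to invoke \propref{prop_3} to promote the resulting inclusion‑minimal element to a minimal $\Phi$‑invariant set. Concretely, consider
\[
  \cA := \setb{N\in\cK(X): N\subset M \mand \Phi(t,N)\subset N \fa t\ge0}\,,
\]
ordered by reverse inclusion, so that a maximal element of $(\cA,\supset)$ is precisely an element of $\cA$ containing no proper subset that lies in $\cA$.

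First I would observe that $\cA\neq\es$, since $M\in\cA$ by hypothesis (this also takes care of the empty chain). Next, given a nonempty chain $(N_\alpha)_{\alpha\in A}$ in $\cA$ --- a subfamily totally ordered by inclusion --- I would take $N_\infty := \bigcap_{\alpha\in A} N_\alpha$ and check that $N_\infty\in\cA$, whence it is an upper bound for the chain in $(\cA,\supset)$. Nonemptiness of $N_\infty$ is where compactness of $X$ enters: each $N_\alpha$ is a nonempty closed subset of the compact space $X$, and any finite subfamily, being totally ordered, has intersection equal to its smallest member and hence nonempty; thus $\set{N_\alpha}$ has the finite intersection property, and therefore $N_\infty\neq\es$. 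As an intersection of closed sets, $N_\infty$ is closed, hence compact, and plainly $N_\infty\subset M$. Forward invariance follows from the monotonicity of $N\mapsto\Phi(t,N)=\bigcup_{x\in N}\Phi(t,x)$: for $x\in N_\infty$ we have $x\in N_\alpha$, hence $\Phi(t,x)\subset\Phi(t,N_\alpha)\subset N_\alpha$, for every $\alpha$; thus $\Phi(t,x)\subset N_\infty$, and taking the union over $x\in N_\infty$ yields $\Phi(t,N_\infty)\subset N_\infty$ for all $t\ge0$. Hence $N_\infty\in\cA$.

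By Zorn's Lemma, $(\cA,\supset)$ has a maximal element $M_0$: a nonempty compact subset of $M$ with $\Phi(t,M_0)\subset M_0$ for all $t\ge0$ and such that no proper subset of $M_0$ shares this property. \propref{prop_3} then shows that $M_0$ is $\Phi$‑invariant. Finally, $M_0$ contains no proper $\Phi$‑invariant subset, since such a subset $N\subsetneq M_0$ would in particular be an element of $\cA$ strictly contained in $M_0$, contradicting the maximality of $M_0$ in $(\cA,\supset)$; therefore $M_0$ is a minimal $\Phi$‑invariant subset of $M$. I do not anticipate any genuine difficulty here: this is a routine Zorn's Lemma compactness argument, and the only step calling for a little care is the chain condition --- ensuring that the intersection over a chain stays nonempty (via the finite intersection property in the compact space $X$) and forward invariant (via monotonicity of $\Phi(t,\cdot)$). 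Note that upper semi‑continuity of $\Phi$ is not needed in this argument.
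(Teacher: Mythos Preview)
Your proof is correct and follows essentially the same Zorn's Lemma argument as the paper, applied to the poset of nonempty compact forward-invariant subsets of $M$ ordered by (reverse) inclusion. You spell out more carefully than the paper does the verification of the chain condition via the finite intersection property and the explicit appeal to \propref{prop_3} to upgrade inclusion-minimality among forward-invariant sets to minimal $\Phi$-invariance, but the underlying strategy is identical.
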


\begin{proof}
  Consider the collection $\cC:= \setb{A\subset \cK(M): \Phi(t,A)\subset A  \text{ for all } t\ge 0}$.
  $\cC$ is partially ordered with respect to set inclusion, and let $\cC'$ be a totally ordered subset of $\cC$. It is obvious that $\bigcap_{A\in \cC'} A$ is nonempty,
  compact and lies in $\cC$. Thus, Zorn's Lemma implies that there exists at least one minimal element in $\cC$ which is a minimal $\Phi$-invariant set.
\end{proof}

While minimal $\Phi$-invariant sets always exists, they are typically non-unique. Uniqueness directly follows for set-valued dynamical systems that
are contractions in the Hausdorff metric. Such contractions can be identified by the following lemma, whose proof will be omitted.

\begin{lemma}
  Consider the set-valued dynamical system $\Phi:\N_0\tm \cK(X)\to\cK(X)$, defined by
  $\Phi(1,x):= U(f(x)) \fa x\in X$, where $f:X\to X$ is a contraction on the compact metric space $(X,d)$, i.e.~one has
  \begin{displaymath}
    d(f(x),f(y))\le L d(x,y) \fa x,y\in X
  \end{displaymath}
  with some Lipschitz constant $L<1$, and $U:X\to \cK(X)$ is a function such that $U(x)$ is a neighbourhood of $x$ for all $x\in X$. Assume that $U$
  is globally Lipschitz continuous (but not necessarily a contraction) with Lipschitz constant $M>0$ such that $ML <1$. The mapping $\Phi(1,\cdot)$
  then is a contraction in $(\cK(X),h)$. The unique fixed point of $\Phi(1,\cdot)$ is the unique minimal $\Phi$-invariant set, which is also globally
  attractive.
\end{lemma}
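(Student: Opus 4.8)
The plan is to show directly that $\Phi(1,\cdot)$ is a contraction on $(\cK(X),h)$ with factor $ML<1$; then the Banach fixed point theorem (valid since $(\cK(X),h)$ is complete by compactness of $X$) gives a unique fixed point $M^\ast$, and the rest follows from the abstract theory established above. The key computation is to estimate $h(\Phi(1,A),\Phi(1,B))$ for $A,B\in\cK(X)$. Writing $\Phi(1,A)=\bigcup_{x\in A}U(f(x))$ and similarly for $B$, I would first observe that the Hausdorff distance behaves well under unions: for families of compact sets, $h\bigl(\bigcup_i C_i,\bigcup_i D_i\bigr)\le\sup_i h(C_i,D_i)$ — this is the standard sup-over-pieces bound, which I would state and use. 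Combined with $h(U(f(x)),U(f(y)))\le M\,d(f(x),f(y))\le ML\,d(x,y)$, this should yield $h(\Phi(1,A),\Phi(1,B))\le ML\,h(A,B)$; the slightly delicate point is matching up the index sets $A$ and $B$, which one handles by the usual two-sided argument (for each $x\in A$ pick a near point in $B$ to control $\dist(\Phi(1,A),\Phi(1,B))$, and symmetrically), so that the union bound is applied with $h(A,B)$ in place of a pointwise distance.

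Once contraction is in hand, I would invoke the Banach fixed point theorem to get a unique $M^\ast\in\cK(X)$ with $\Phi(1,M^\ast)=M^\ast$, and note that $h(\Phi(n,A),M^\ast)\le(ML)^n h(A,M^\ast)\to0$ for every $A\in\cK(X)$, i.e.\ $M^\ast$ is globally attractive in the Hausdorff metric. For the identification of $M^\ast$ as \emph{the} unique minimal $\Phi$-invariant set: any minimal invariant set $N$ satisfies $\Phi(n,N)=N$ for all $n$, so $h(N,M^\ast)=h(\Phi(n,N),M^\ast)\le(ML)^n h(N,M^\ast)$, forcing $h(N,M^\ast)=0$, hence $N=M^\ast$; conversely $M^\ast$ itself is $\Phi$-invariant (one has $\Phi(1,M^\ast)=M^\ast$, and for $\mathbb{T}=\mathbb{N}_0$ this already gives $\Phi(t,M^\ast)=M^\ast$ for all $t$ by (H2)), so by \propref{prop_1} it contains a minimal invariant set, which by the uniqueness just shown equals $M^\ast$; thus $M^\ast$ is itself minimal.

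I expect the main (though still modest) obstacle to be the union bound for the Hausdorff distance and its correct application to $\Phi(1,A)$ versus $\Phi(1,B)$ over \emph{different} index sets — one must be slightly careful that $\dist(U(f(x)),\Phi(1,B))\le\dist(U(f(x)),U(f(x')))$ for a well-chosen $x'\in B$ with $d(x,x')$ close to $\dist(x,B)$, and then take the supremum over $x\in A$; the reverse semi-distance is symmetric. A secondary point worth a sentence is that $\Phi$ is genuinely a set-valued dynamical system in the sense of (H1)--(H4): (H1) and (H2) are built into the definition $\Phi(n,\cdot)=\Phi(1,\cdot)^{\circ n}$, (H4) is vacuous in the discrete case, and (H3) — upper semi-continuity — follows from continuity of $f$ and $U$, which in fact gives the stronger \emph{continuous} dependence on $x$ noted in the text. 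Everything else is routine.
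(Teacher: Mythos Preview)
Your proposal is correct and follows essentially the same route as the paper: both arguments reduce to the inequality
\[
\dist\bigl(\Phi(1,A),\Phi(1,B)\bigr)\le \sup_{x\in A}\inf_{y\in B} h\bigl(U(f(x)),U(f(y))\bigr)\le ML\, h(A,B),
\]
obtained by matching each $x\in A$ to a near $y\in B$, and then conclude by the contraction mapping theorem on $(\cK(X),h)$. The only cosmetic difference is that the paper first isolates the statement that the set-extension $U:\cK(X)\to\cK(X)$, $U(A)=\bigcup_{a\in A}U(a)$, is Lipschitz with the same constant, and then invokes composition with the (equally Lipschitz) extension of $f$, whereas you bound $\Phi(1,\cdot)=U\circ f$ in one step; your explicit treatment of why the fixed point is the unique minimal invariant set and globally attractive is more detailed than the paper's, which simply appeals to the contraction mapping theorem.
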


%\begin{proof}
%  First prove that the extension $U:\cK(X)\to\cK(X)$, defined by $U(A):=\bigcup_{a\in A} U(a)$, is Lipschitz continuous. Given $A,B\in\cK(X)$, we
%  have both
%  \begin{align*}
%    \sup_{x\in A} \inf_{y\in B} h\opintb{U(x),U(y)} &\le L \sup_{x\in A} \inf_{y\in B} d(x,y)\\
%    & \le L h(A,B)
%  \end{align*}
%  and
%  \begin{align*}
%    \sup_{x\in A} \inf_{y\in B} h\opintb{U(x),U(y)}
%    &\ge \sup_{x\in A} \inf_{y\in B} \dist\opintb{U(x),U(y)} \\
%    &=\sup_{x\in A}\inf_{y\in B}\sup_{\tilde x\in U(x)}\inf_{\tilde y\in U(y)} d(\tilde x,\tilde y)\\
%    &\ge \sup_{x\in A}\sup_{\tilde x\in U(x)}\inf_{y\in B}\inf_{\tilde y\in U(y)} d(\tilde x,\tilde y)\\
%    & = \sup_{x\in U(A)}\inf_{y\in U(B)} d(x,y)\\
%    & = \dist\opintb{U(A),U(B)}\,.
%  \end{align*}
%  This means that
%  \begin{displaymath}
%    \dist\opintb{U(A),U(B)}\le L h(A,B) \fa A,B\in \cK(X)\,,
%  \end{displaymath}
%  which finally implies
%  \begin{displaymath}
%    h \opintb{U(A),U(B)}\le L h(A,B) \fa A,B\in \cK(X)\,.
%  \end{displaymath}
%  The fact that $\Phi(1,\cdot)$ is a contraction then follows, since it is essentially the composition of two Lipschitz continuous mappings, where
%  the product of the respective Lipschitz constants is less than $1$. Application of the contraction mapping theorem finishes the proof of this
%  lemma.
%\end{proof}

The above lemma applies in particular to the motivating example presented in the Introduction. In this case, $U(x):=\overline{B_\eps(x)}$ with
Lipschitz constant $1$. Hence, if $f$ is a contraction, then the set-valued mapping $F$ has a globally attractive unique minimal invariant set.

%  Consider a contraction $f:X\to X$ on the compact metric space $(X,d)$, i.e.~one has
%  \begin{displaymath}
%    d(f(x),f(y))\le L d(x,y) \fa x,y\in X
%  \end{displaymath}
%  with some Lipschitz constant $L<1$. The contraction mapping theorem implies that $f$ has a uniquely determined fixed point $x^*$, which is globally
%  attractive. Similarly to the initial motivations in the Introduction, let us now consider the set-valued dynamical system which is induced by
%  additive $\eps$-perturbations of $f$. This means that one has the representation $\Phi(1, x)= \overline{B_\eps(f(x))}$. Then one can show that
%  $\Phi(1,\cdot):\cK(X)\to\cK(X)$ is a contraction in $\cK(X)$, and the uniquely determined fixed point of this mapping is then the unique
%  $\Phi$-invariant set which is also globally attractive.
%
%
%\end{example}

\section{Generalisation of attractor-repeller decomposition}\label{sec_2}

The purpose of this section is to provide generalisations of attractor-repeller decompositions which were introduced in
\cite{McGehee_06_1,Li_07_1} for the study of Morse decompositions of set-valued dynamical systems. These generalisations are necessary for our
purpose, because we deal with invariant sets rather than attractors, and they will be applied in \secref{sec_3} in the context of bifurcation theory.

Fundamental for the definition of Morse decompositions are domains of attraction (of attractors), because associated repellers are then identified as complements of these sets. For a given $\Phi$-invariant set $M$, the \emph{domain of attraction} is defined by
\begin{align*}
  \cA(M) = \setB{x\in X  : \lim_{t\to\infty} \dist\opintb{\Phi(t,x), M} = 0 }\,.
\end{align*}
If $M$ is an attractor, that is a $\Phi$-invariant set such that there exists an $\eta>0$ with
\begin{displaymath}
  \lim_{t\to\infty} \dist\opintb{\Phi(t,B_\eta(M)), M} = 0\,,
\end{displaymath}
then the complementary set $X\setminus \cA(M)$ is a $\Phi^*$-invariant set, which has the interpretation of a repeller, because all points outside of
this set converge to the attractor in forward time. It is worth to note that this repeller is not necessarily $\Phi$-invariant (which is a difference
from the classical Morse decomposition theory).

For a $\Phi$-invariant set $M$ which is not an attractor, the complementary set $X\setminus \cA(M)$ is not necessarily $\Phi^*$-invariant, but this
property can be attained when $\cA(M)$ is replaced by a slightly smaller set.

\begin{proposition}\label{prop_2}
  Let $\Phi:\T\tm X\to \cK(X)$ be a set-valued dynamical system, and let $M\subset X$ be $\Phi$-invariant such that $\cA(M) \not= X$, i.e.~$M$ is not
  globally attractive. Then the complement of the set
  \begin{align*}
    \cA_-(M):= &\cA(M) \sm \setB{x\in \cA(M) : \mbox{ there exist } t\ge 0 \mwith \Phi(t,x)\cap \partial \cA(M) \not=\es\,, \\
    & \mbox{ or for all $\gamma>0$, one has } \limsup_{t\to\infty} \dist\opintb{\Phi(t,B_\gamma(x)), M} > 0}\,,
  \end{align*}
  i.e.~the set $M^*:=X\sm \cA_-(M)$, is $\Phi^*$-invariant.
\end{proposition}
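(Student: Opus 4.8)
The plan is to show that $M^* = X \setminus \cA_-(M)$ is $\Phi^*$-invariant, i.e.\ $\Phi^*(t, M^*) = M^*$ for all $t \ge 0$. The natural strategy is to prove the two set-inclusions $\Phi^*(t, M^*) \subset M^*$ and $\Phi^*(t, M^*) \supset M^*$ separately, translating each via the definition $\Phi^*(t,\xi) = \{x : \xi \in \Phi(t,x)\}$ into a statement purely about the forward system $\Phi$. Concretely, $y \in \Phi^*(t, M^*)$ means $\Phi(t,y) \cap M^* \ne \es$, so the goal becomes: $y$ belongs to the ``bad'' set $\cA(M) \setminus \cA_-(M)$ (call it $\cB$) if and only if $\Phi(t,y)$ is entirely contained in $\cA_-(M)$, equivalently $\Phi(t,y) \cap \cB = \es$ for all $t$. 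So really the whole proposition reduces to showing that $\cB := \{x \in \cA(M) : \exists\, t \ge 0,\ \Phi(t,x) \cap \partial\cA(M) \ne \es \text{ or } \forall \gamma > 0,\ \limsup_t \dist(\Phi(t,B_\gamma(x)), M) > 0\}$ is ``backward-saturated and forward-closed'' in the right sense: $z \in \cB$, $z \in \Phi(t, w)$, $w \in \cA(M)$ $\Rightarrow$ $w \in \cB$; and conversely $w \in \cB$, $z \in \Phi(t,w)$ $\Rightarrow$ $z \in \cB$ (the forward image of a bad point stays bad, except one must be careful when the forward image leaves $\cA(M)$, but the ``$\Phi(t,x) \cap \partial\cA(M)\ne\es$'' clause is exactly designed to absorb that case).

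First I would record the elementary structural facts: $\cA(M)$ is forward-invariant ($\Phi(t, \cA(M)) \subset \cA(M)$), which follows from (H2) and the cocycle property together with the definition of $\cA(M)$; and $\cA(M)$ is open in $X$ when $M$ is suitably attracting on its domain --- though here $M$ need not be an attractor, so I would instead work with $\partial\cA(M)$ directly and not assume openness, being careful that the proposition's definition of $\cA_-(M)$ removes exactly the boundary-hitting points and the points that fail the ``uniform attraction from a neighbourhood'' condition. Next, for the forward-closure direction: if $w \in \cB$ via the boundary clause, pick $s$ with $\Phi(s,w) \cap \partial\cA(M) \ne \es$; then for $z \in \Phi(t,w)$ I want $\Phi(\cdot, z)$ to also hit $\partial\cA(M)$, which holds because $\Phi(s, w) \subset \Phi(s-t, \Phi(t,w)) \ni$ points reachable from $z$ --- here I must handle $s \ge t$ versus $s < t$ separately, and in the latter case use that once an orbit touches $\partial\cA(M)$ it either stays on the boundary or the point $z$ further along is outside $\cA(M)$ entirely, hence not in $\cA_-(M)$. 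If $w \in \cB$ via the non-uniform-attraction clause, then for $z \in \Phi(t,w)$ one has $B_\gamma(z) \subset \Phi(t, B_{\gamma'}(w))$ for suitable $\gamma'$ (by upper semi-continuity / openness of the set-valued map arguments), so the $\limsup$ defect propagates. For the backward-saturation direction: if $z \in \cB$ and $z \in \Phi(t,w)$ with $w \in \cA(M)$, then any orbit through $z$ that witnesses membership in $\cB$ extends to an orbit through $w$ (prepend the piece from $w$ to $z$), and a neighbourhood of $w$ maps under $\Phi(t,\cdot)$ into a neighbourhood of a set containing $z$, transferring the $\limsup$ property back to $w$; the upper semi-continuity (H3) of $\Phi$ is the key tool making the neighbourhood $\gamma \mapsto \gamma'$ bookkeeping work.

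Assembling these, I would then argue the two inclusions for $M^*$: for $\Phi^*(t,M^*) \subset M^*$, take $y \in \Phi^*(t, M^*)$, so $\Phi(t,y) \cap \cA_-(M) \ne \es$; pick $z \in \Phi(t,y) \cap \cA_-(M)$; if $y \in \cB$ we would derive $z \in \cB$ by forward-closure, contradicting $z \in \cA_-(M)$; hence $y \notin \cB$, and also $y \in \cA(M)$ since $z \in \cA(M) = \cA(M)$ and... here one needs that $z$ being reachable from $y$ forces $y \in \cA(M)$, which I would get from the observation that $\dist(\Phi(s,y), M) \to 0$ because $\Phi(s+t, y) \supset \Phi(s, z)$ eventually — wait, this needs the inclusion $\Phi(s,z) \subset \Phi(s+t,y)$ which holds by (H2) since $z \in \Phi(t,y)$. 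So $y \in \cA(M) \setminus \cB = \cA_-(M)$, i.e.\ $y \in M^*$? No --- $M^* = X \setminus \cA_-(M)$, so I have the logic inverted and must recheck: $y \in \Phi^*(t,M^*)$ with $M^* = X \setminus \cA_-(M)$ means $\Phi(t,y)$ meets the complement of $\cA_-(M)$, i.e.\ meets $(X \setminus \cA(M)) \cup \cB$; in either case the reachable point is ``bad'', and I propagate badness backward (using backward-saturation, noting points outside $\cA(M)$ pull back to points outside $\cA(M)$ or into $\cB$) to conclude $y \notin \cA_-(M)$, hence $y \in M^*$. The reverse inclusion $M^* \subset \Phi^*(t, M^*)$ uses surjectivity-type reasoning: every $y \in M^*$ must be reachable in time $t$ from some point, and that point can be taken in $M^*$ by forward-closure of badness. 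The main obstacle I anticipate is precisely the careful $\eps$--$\delta$ (here $\gamma$--$\gamma'$) transfer of the ``non-uniform attraction'' clause across the map $\Phi(t,\cdot)$ in both directions, since $\Phi$ is only upper semi-continuous, not continuous, so neighbourhoods can shrink unpredictably under images and one has to phrase the $\limsup$ condition so that it is genuinely preserved; a secondary subtlety is handling orbits that graze $\partial\cA(M)$ without crossing it, which is why the definition quantifies over \emph{some} $t \ge 0$ with $\Phi(t,x) \cap \partial\cA(M) \ne \es$ rather than requiring actual escape.
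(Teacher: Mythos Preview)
Your high-level plan---prove the two inclusions $\Phi^*(t,M^*)\subset M^*$ and $M^*\subset\Phi^*(t,M^*)$ by translating each into a forward property of $\Phi$ on $\cA_-(M)$---is exactly the paper's. But your execution has a genuine gap, and the paper fills it with an intermediate lemma you are missing.

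The gap is your claim of ``forward-closure of $\cB$'': $w\in\cB$, $z\in\Phi(t,w)$ $\Rightarrow$ $z\in\cB$ (or at least $z\in M^*$). This is \emph{false}. Because $\Phi$ is set-valued, $\Phi(t,w)$ can branch: one branch may eventually meet $\partial\cA(M)$ (making $w$ bad), while another branch containing $z$ stays safely inside $\cA_-(M)$. Your line ``$\Phi(s,w)\subset\Phi(s-t,\Phi(t,w))\ni$ points reachable from $z$'' conflates the union $\bigcup_{z'\in\Phi(t,w)}\Phi(s-t,z')$ with $\Phi(s-t,z)$ for the \emph{particular} $z$. This breaks your route to $M^*\subset\Phi^*(t,M^*)$. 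Separately, the obstacle you yourself anticipate in backward-saturation is real and not resolvable as stated: getting $\Phi(t,B_\gamma(w))$ to contain a ball around $z\in\Phi(t,w)$ would require lower semi-continuity, which (H3) does not give.

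The paper bypasses both problems by first establishing the characterisation
\[
\cA_-(M)=\Bigl\{x\in X: \text{for every }T\ge0\text{ there is a neighbourhood }V\text{ of }\Phi(T,x)\text{ with }\lim_{t\to\infty}\dist(\Phi(t,V),M)=0\Bigr\}.
\]
With this in hand, forward invariance of $\cA_-(M)$ (hence $\Phi^*(t,M^*)\subset M^*$) is immediate, since the universal quantifier over $T$ absorbs the time shift and a neighbourhood of $\Phi(T+t,x)\supset\Phi(T,z)$ is automatically a neighbourhood of $\Phi(T,z)$. For $M^*\subset\Phi^*(t,M^*)$ one assumes $\Phi(t,x)\subset\cA_-(M)$ and verifies the lemma's criterion at each $T$: for $T\le t$, upper semi-continuity (the correct direction!) gives $\delta>0$ with $\Phi\bigl(t-T,\overline{B_\delta(\Phi(T,x))}\bigr)\subset\overline{B_\gamma(\Phi(t,x))}\subset\cA_-(M)$, and compact subsets of $\cA_-(M)$ are uniformly attracted; for $T\ge t$ one uses forward invariance of $\cA_-(M)$ and its openness. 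The point is that the ``for all $T$'' formulation lets you work with neighbourhoods of the \emph{whole} compact set $\Phi(T,x)$ rather than chasing individual branches, and it is precisely this that converts the argument into one requiring only upper semi-continuity.
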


The set $M^*$ is called the \emph{dual} of $M$. Under the additional assumption that $M$ is an attractor in \propref{prop_2}, i.e.~$\cA(M)$ is a
neighbourhood of $M$, the pair $(M,M^*)$ is an attractor-repeller pair as discussed in \cite{McGehee_06_1}. This pair can be extended to obtain Morse
decompositions, see \cite{Li_07_1}.

Before proving this proposition, we will derive an alternative characterization of the set $\cA_-(M)$.

\begin{lemma}
  Let $\Phi:\T\tm X\to \cK(X)$ be a set-valued dynamical system and $M\subset X$ be $\Phi$-invariant. Then the set $\cA_-(M)$ admits the representation
  \begin{align*}
    \cA_-(M) = \setB{x\in X & : \text{for all } T \ge 0\,, \text{ there exists a neighbourhood } V \text{ of } \Phi(T,x) \\
    &\qquad \text{ with }\lim_{t\to\infty} \dist\opintb{\Phi(t,V), M} = 0 }\,.
  \end{align*}
\end{lemma}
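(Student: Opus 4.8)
The plan is to prove the two set inclusions separately. Write $\cB(M)$ for the set on the right-hand side of the claimed representation, i.e.\ the set of $x\in X$ such that for all $T\ge 0$ there is a neighbourhood $V$ of $\Phi(T,x)$ with $\lim_{t\to\infty}\dist\opintb{\Phi(t,V),M}=0$. First I would record a few elementary reductions. Since $\Phi$ satisfies the cocycle property (H2) and $\Phi(t,\cdot)$ respects unions, the condition defining $\cB(M)$ with parameter $T$ is equivalent to asking that $\Phi(T,x)$ lie in the \emph{interior} of the open ``uniform attraction'' set
\begin{displaymath}
  \cG := \setB{y\in X : \text{there exists } \gamma>0 \mwith \lim_{t\to\infty}\dist\opintb{\Phi(t,B_\gamma(y)),M}=0}\,;
\end{displaymath}
note $\cG$ is by definition open, $\cG\subset\cA(M)$, and $M\subset\cG$ when $M$ is (even locally) attractive — but in general $\cG$ may be empty, and then $\cB(M)=\es=\cA_-(M)$ as well, so that degenerate case is consistent. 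The key observation is that $x\in\cB(M)$ exactly says $\Phi(T,x)\subset\cG$ for every $T\ge 0$; in particular (taking $T=0$) $x\in\cG$, so $x$ has a neighbourhood attracted to $M$, hence every point of $\Phi(t,x)$ lies in $\cA(M)$ for all $t$, and moreover no $\Phi(t,x)$ can meet $\partial\cA(M)$ because $\cG$ is open and disjoint from $\partial\cA(M)$ (points of $\cG$ are interior to $\cA(M)$).

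For the inclusion $\cB(M)\subset\cA_-(M)$: suppose $x\in\cB(M)$. Then $x\in\cG\subset\cA(M)$. By the previous paragraph, $\Phi(t,x)\cap\partial\cA(M)=\es$ for all $t\ge 0$, so $x$ fails the first disqualifying clause in the definition of $\cA_-(M)$. For the second clause, we must show there is some $\gamma>0$ with $\limsup_{t\to\infty}\dist\opintb{\Phi(t,B_\gamma(x)),M}=0$ — but this is precisely $x\in\cG$, which holds. Hence $x\in\cA_-(M)$.

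For the reverse inclusion $\cA_-(M)\subset\cB(M)$: suppose $x\in\cA_-(M)$, and fix $T\ge 0$; I must produce a neighbourhood $V$ of $\Phi(T,x)$ uniformly attracted to $M$. Because $x$ survives the second disqualifying clause, there is $\gamma_0>0$ with $\lim_{t\to\infty}\dist\opintb{\Phi(t,B_{\gamma_0}(x)),M}=0$; that is, $x\in\cG$. Now I would like to ``push $\cG$ forward'': the goal is to show $\Phi(T,x)\subset\cG$, i.e.\ each $y\in\Phi(T,x)$ has its own $\gamma_y>0$ with $B_{\gamma_y}(y)$ attracted to $M$, and then extract a finite subcover of the compact set $\Phi(T,x)$ to get a single neighbourhood $V$. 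To see $\Phi(T,x)\subset\cG$, I would argue by contradiction using upper semi-continuity (H3): if some $y\in\Phi(T,x)$ were not in $\cG$, then for every $\gamma>0$ one has $\limsup_{t\to\infty}\dist\opintb{\Phi(t,B_\gamma(y)),M}>0$; combined with $y\in\Phi(T,x)$ and the cocycle property, this should force, via (H3) applied at $(T,x)$ and the fact that $B_{\gamma_0}(x)$ is a neighbourhood of $x$ whose forward images shrink to $M$, a contradiction — roughly, points near $y$ are reached from points near $x$, whose orbits converge to $M$, yet $y$ has orbits of nearby points staying bounded away from $M$. Alternatively, and perhaps more cleanly, one shows directly that $\cG$ is forward invariant under $\Phi$ in the strong sense $\Phi(s,\cG)\subset\cG$ for all $s\ge 0$ (if $z\in\cG$ with parameter $\gamma$, then using upper semi-continuity each $w\in\Phi(s,z)$ has a neighbourhood contained in $\Phi(s,B_\gamma(z))$ up to passing to a slightly smaller radius, and $\Phi(t,\Phi(s,B_\gamma(z)))=\Phi(t+s,B_\gamma(z))\to M$), and then $x\in\cG$ immediately gives $\Phi(T,x)\subset\cG$. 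Either way, once $\Phi(T,x)\subset\cG$ is established, compactness of $\Phi(T,x)\in\cK(X)$ yields finitely many points $y_1,\dots,y_k\in\Phi(T,x)$ and radii $\gamma_1,\dots,\gamma_k$ with $\Phi(T,x)\subset V:=\bigcup_i B_{\gamma_i}(y_i)$ and $\dist\opintb{\Phi(t,V),M}\le\max_i\dist\opintb{\Phi(t,B_{\gamma_i}(y_i)),M}\to 0$; since $T\ge 0$ was arbitrary, $x\in\cB(M)$.

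The main obstacle I anticipate is the step $\Phi(T,x)\subset\cG$ — more precisely, showing that the open ``uniform attraction'' set $\cG$ is strongly forward invariant. This is where upper semi-continuity (H3) must be used carefully: from $z\in\cG$ one knows small balls around $z$ are attracted, but to conclude the same for points $w\in\Phi(s,z)$ one needs that a whole neighbourhood of $w$ is ``covered'' by $\Phi(s,\cdot)$ applied to a neighbourhood of $z$, and (H3) only gives the \emph{outer} semicontinuity $\limsup_{z'\to z}\Phi(s,z')\subset\Phi(s,z)$, not a lower bound. The correct argument works with $\Phi(s,B_\gamma(z))$ directly: this set contains $\Phi(s,z)\ni w$, its closure still has all forward images converging to $M$ (by the cocycle property and continuity of $t\mapsto\Phi(t,\cdot)$ from (H4), plus the fact that $\dist(\Phi(t,\overline A),M)=\dist(\overline{\Phi(t,A)},M)=\dist(\Phi(t,A),M)$ for the upper semicontinuous $\Phi$ when $A$ is precompact), and one needs $w$ to be an interior point of $\Phi(s,B_\gamma(z))$ or of a slightly enlarged forward-invariant set — this interiority is the delicate point and may require replacing $B_\gamma(z)$ by a slightly shrunk ball and invoking that $\Phi(s,\cdot)$ maps the interior in a controlled way, or alternatively exploiting the ``$\Phi(t,x)$ contains a ball'' hypothesis mentioned earlier in the paper. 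I would handle it by proving the auxiliary claim ``$y\in\cG$ and $y\in\Phi(T,x)$, $x\in\cG$ $\Rightarrow$ nothing,'' and instead directly: given $x\in\cG$ with radius $\gamma_0$, for any $T\ge 0$ we have $\Phi(T,x)\subset\Phi(T,B_{\gamma_0}(x))$, and I would show the right-hand side is contained in $\cG$ by taking, for $w\in\Phi(T,B_{\gamma_0}(x))$, the containment $w\in\Phi(T,x')$ for some $x'\in B_{\gamma_0}(x)$ together with $\Phi(T,B_\delta(x'))\subset$ (a set attracted to $M$) for $\delta$ small enough that $B_\delta(x')\subset B_{\gamma_0}(x)$ — but again outer semicontinuity does not directly make $w$ interior to $\Phi(T,B_\delta(x'))$. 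The cleanest rescue is likely to note that the \emph{union} $\Phi(T,B_{\gamma_0}(x))$ over the \emph{open} ball is itself an open set when $\Phi(T,\cdot)$ is lower semicontinuous, which it is \emph{not} assumed to be in general; so in the merely upper semicontinuous case one should instead define $\cA_-(M)$'s representation via closed neighbourhoods, and the lemma's phrasing with ``a neighbourhood $V$'' (not necessarily open) is exactly what makes it go through: take $V:=\Phi(T,\overline{B_{\gamma}(x)})$ for suitable $\gamma<\gamma_0$, which is compact, contains $\Phi(T,x)$ in — and here is the crux — its interior, because $\Phi(T,\overline{B_\gamma(x)})\supset\Phi(T,B_{\gamma'}(x))$ for $\gamma<\gamma'<\gamma_0$ and the latter contains an open set around each of its points only if... — so ultimately I expect the honest proof to invoke the standing hypothesis that $\Phi(t,\xi)$ contains a ball, guaranteeing the needed interiority, and that is the one technical point I would flag as requiring care.
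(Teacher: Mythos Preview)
Your direction $\cB(M)\subset\cA_-(M)$ is fine and essentially matches the paper's. The gap is in the other direction, $\cA_-(M)\subset\cB(M)$: you use only the \emph{second} clause in the definition of $\cA_-(M)$ (the existence of some $\gamma_0>0$ with $B_{\gamma_0}(x)$ attracted, i.e.\ $x\in\cG$) and then try to propagate this forward by showing $\cG$ is strongly forward invariant. As you yourself diagnose at length, upper semi-continuity alone does not give the interiority needed for that argument, and you are driven toward invoking extra hypotheses (like (H6)) that the lemma does not assume.

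The missing idea is simply to use the \emph{first} clause: $x\in\cA_-(M)$ also tells you $\Phi(T,x)\cap\partial\cA(M)=\es$ for every $T\ge 0$. Combined with $\Phi(T,x)\subset\cA(M)$ (forward invariance of the domain of attraction), this gives $\Phi(T,x)\subset\inner\cA(M)$ directly. Now just take any compact neighbourhood $V$ of the compact set $\Phi(T,x)$ with $V\subset\cA(M)$, and invoke the standard fact (which the paper records at the outset of the proof and again in the proof of the following proposition) that compact subsets of $\cA(M)$ are uniformly attracted to $M$. That is the whole argument; there is no need to show $\cG$ is forward invariant, no finite-subcover step, and no appeal to balls in $\Phi(t,\xi)$. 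Your difficulty arose precisely from discarding the one piece of information in the definition of $\cA_-(M)$ that makes the forward direction immediate.
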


\begin{proof}
  First, note that compact subsets $K$ of $\cA_-(M)$ are attracted by $M$, i.e.~we have $\lim_{t\to\infty} \dist(\Phi(t,K),M) = 0$. We have to show two set
  inclusions.
  \\
  $(\subset)$ Let $x\in \cA_-(M)$ and $T>0$. Since $\Phi(T,x)$ lies in the interior of $\cA(M)$, there exists a compact neighbourhood $V$ of $\Phi(T,x)$ that is contained
  in $\cA(M)$. This proves that $\lim_{t\to\infty} \dist\opintb{\Phi(t,V),M} = 0$, and hence, $x$ is contained in the right hand side.
  \\
  $(\supset)$ Let $x\in X$ such that for all $T\ge0$, there exists a neighbourhood $V$ of $\Phi(T,x)$ with $\lim_{t\to\infty} \dist
  \opintb{\Phi(t,V),M} = 0$. This implies that for all $T\ge 0$, one has $\Phi(t,x)\cap \partial \cA(M)= \es$, which finishes the proof of this
  lemma.
\end{proof}

The set $\cA_-(M)$ thus describes all trajectories in the domain of attraction that are attracted also under perturbation.

\begin{proof}[Proof of \propref{prop_2}]
  It will be shown that $\Phi^*(t,M^*)=M^*$ for all $t\ge 0$.
  \\
  $(\subset)$ Assume that there exist $t\ge0$ and $x\in \Phi^*(t,M^*)\setminus M^* = \Phi^*(t,M^*) \cap \cA_-(M)$. This implies that $\Phi(t,x) \cap M^* \not= \es$ and
  $x\in\cA_-(M)$, which contradicts the fact that $\cA_-(M)$ fulfills $\Phi(t,\cA_-(M))\subset \cA_-(M)$ for all $t\ge0$.
  \\
  $(\supset)$  Assume that there exist $t\ge 0$ and $x\in M^*\sm \Phi^*(t,M^*)$. This means that $\Phi(t, x)\cap M^*=\es$, and hence, $\Phi(t,x)\subset \cA_-(M)$. We will
  show that this implies that $x\in\cA_-(M)$, which is a contradiction. Let $T\ge 0$, and consider first the case that $T\le t$. The fact that $\cA_-(M)$ is open and
  $\Phi(t,x)\subset \cA_-(M)$ is compact implies that there exists a $\gamma>0$ such that $\overline{B_\gamma(\Phi(t,x))}\subset \cA_-(M)$. Moreover, the continuity
  of $\Phi$ and the relation $\Phi(t-T,\Phi(T,x))=\Phi(t,x)$ yield the existence of $\delta>0$ such that $\Phi\opintb{t-T, \overline{B_\delta(\Phi(T,x))}}\subset
  \overline{B_\gamma(\Phi(t,x))}\subset \cA_-(M)$. Since compact subsets of $\cA_-(M)$ are attracted to $M$, the assertion follows. Consider now the case $T\ge t$. Since
  $\cA_-(M)$ is invariant and $\Phi(t,x)\subset \cA_-(M)$, $\Phi(T,x)$ is a compact subset of $\cA_-(M)$. $\cA_-(M)$ is open, so there exists a compact neighbourhood of
  $\Phi(T,x)$ which is attracted by $M$. This finishes the proof of this proposition.
\end{proof}

\section{Dependence of minimal invariant sets on parameters}

The main goal of this section is to describe how minimal invariant sets depend on parameters. We consider a family
$(\Phi_\lambda)_{\lambda\in\Lambda}$ of set-valued dynamical systems $\Phi_\lambda:\T\tm X\to \cK(X)$, where $(\Lambda, d_\Lambda)$ is a metric space.

We assume now the conditions (H4) and (H5) that are naturally fulfilled for set-valued dynamical systems generated by mappings $f_\lambda:X\to X$, depending continuously on a real parameter $\lambda$ and perturbed by a closed $\eps$-ball as in \eqref{eqn_1}. The first condition addresses uniform continuity in $x\in X$.
\begin{itemize}
  \item[(H4)]
  $(\lambda, t)\mapsto \Phi_\lambda(t,x)$ is continuous in $(\lambda,t)\in \Lambda\times \T$ uniformly in $x$.
\end{itemize}
As in \theoref{theo_3}, we exclude single-valued dynamical systems and assume
\begin{itemize}
  \item[(H5)]
  $\Phi_\lambda(t,x)$ contains a ball of positive radius for all $(t,x)\in \T\tm X$ with $t>0$ and $\lambda\in \Lambda$, and moreover, there
  exist $T>0$ and $\eps>0$ such that $\Phi_\lambda(T,x)$ contains a ball of size $\eps$ for all $x\in X$.
\end{itemize}
%Finally, (H6) excludes the existence of isolated points in the sets $\Phi_\lambda(t,x)$ for all positive $t$:
%\begin{itemize}
%  \item[(H6)] $\overline{\inner \Phi_\lambda(t,x)} = \Phi_\lambda(t,x)$ for all $t>0$, $x\in X$ and $\lambda\in\Lambda$.
%\end{itemize}
%
The union of all minimal $\Phi_\lambda$-invariant sets in $X$ will be denoted by $\cM_\lambda$. The following theorem describes how $\cM_\lambda$
depends on the parameter.

\begin{theorem}[Dependence of minimal invariant sets on parameters]\label{theo_1}
  Given a family of set-valued dynamical systems $(\Phi_\lambda)_{\lambda\in\Lambda}$ satisfying (H1)--(H5), and let $M_{\lambda_\infty}\subset  \cM_{\lambda_\infty}$ be a minimal $\Phi_{\lambda_\infty}$-invariant set for some $\lambda_\infty\in\Lambda$. Then for each sequence $(\lambda_n)_{n\in\N}$ converging to $\lambda_\infty$, there exist a subsequence $(\lambda_{n_k})_{k\in\N}$ and a $\delta>0$ such that exactly one
  of the following statements holds.
  \begin{itemize}
    \item[(i)] \emph{Lower semi-continuous dependence:}
    \begin{displaymath}
      M_{\lambda_\infty} \subset \liminf_{k\to\infty} \opintb{\cM_{\lambda_{n_k}}\cap B_\delta(M_{\lambda_\infty})}\,.
    \end{displaymath}
    \item[(ii)] \emph{Instantaneous appearance:}
    \begin{displaymath}
      \es = \limsup_{k\to\infty}\opintb{\cM_{\lambda_{n_k}} \cap B_\delta(M_{\lambda_\infty})} \,.
    \end{displaymath}
  \end{itemize}
\end{theorem}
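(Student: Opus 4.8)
The plan is to fix the minimal $\Phi_{\lambda_\infty}$-invariant set $M:=M_{\lambda_\infty}$ and a sequence $\lambda_n\to\lambda_\infty$, and to show that along a subsequence the sets $\cM_{\lambda_n}$ behave near $M$ in exactly one of the two prescribed ways. First I would choose $\delta>0$ small enough that $\overline{B_{2\delta}(M)}$ meets no minimal $\Phi_{\lambda_\infty}$-invariant set other than $M$; this is possible because by (H6) there are only finitely many minimal invariant sets and they are pairwise disjoint compact sets. The dichotomy will then be set up according to whether, passing to a subsequence, the sets $\cM_{\lambda_{n_k}}\cap B_\delta(M)$ are all nonempty or all empty. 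If along some subsequence they are all empty, then trivially $\limsup_{k\to\infty}\opintb{\cM_{\lambda_{n_k}}\cap B_\delta(M)}=\es$, which is case~(ii); so the substance of the argument is the case where, after passing to a subsequence, $\cM_{\lambda_{n_k}}\cap B_\delta(M)\neq\es$ for all $k$, and we must then prove the lower semi-continuous inclusion in~(i).

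For that case, the idea is to extract, from each $\cM_{\lambda_{n_k}}\cap B_\delta(M)$, a minimal $\Phi_{\lambda_{n_k}}$-invariant set $N_k$ having nonempty intersection with $B_\delta(M)$, and to show $M\subset\liminf_{k\to\infty}\opintb{N_k\cap B_\delta(M)}$, which is stronger than the stated conclusion. The key mechanism is a shadowing/persistence argument exploiting (H6): because $\Phi_\lambda(t,x)$ always contains a ball of positive radius and, after a uniform time $T$, a ball of a uniform size $\eps$, any minimal invariant set of $\Phi_{\lambda_{n_k}}$ that comes within $\delta$ of $M$ is forced to fill out a uniform neighbourhood locally and cannot be a small isolated blob; more precisely, one shows that $\Phi_{\lambda_{n_k}}(T,y)$ for $y$ near $M$ covers $\overline{B_\eps}$ around each of its points, and combining this with the uniform continuity (H5) of $(\lambda,t)\mapsto\Phi_\lambda(t,x)$ one propagates points of $M$ into $N_k$. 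Concretely, fix $x\in M$; using $\Phi_{\lambda_\infty}(T,M)=M$ choose $z\in M$ with $x\in\Phi_{\lambda_\infty}(T,z)$, pick $w_k\in N_k$ close to $z$, and use that $\Phi_{\lambda_{n_k}}(T,w_k)$ is $h$-close to $\Phi_{\lambda_\infty}(T,z)\ni x$ together with the ball-of-size-$\eps$ property to conclude $\dist(x,N_k)\to 0$. Since $x\in M$ was arbitrary and $M$ is compact, $M\subset\liminf_{k\to\infty}N_k$, and as $M\subset B_\delta(M)$ the truncation by $B_\delta(M)$ is harmless for points of $M$, giving~(i).

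It remains to check that the two cases are mutually exclusive with the claimed $\delta$; this is immediate, since in case~(i) every point of $M$ lies in $\liminf\opintb{\cM_{\lambda_{n_k}}\cap B_\delta(M)}$, forcing $\cM_{\lambda_{n_k}}\cap B_\delta(M)\neq\es$ for all large $k$, which is incompatible with the emptiness of the $\limsup$ in~(ii). One should also record that the choice of subsequence can be made once and for all: first thin out so that the intersections are either uniformly empty or uniformly nonempty, and in the nonempty case the extraction of the $N_k$ needs no further thinning. The main obstacle I expect is the persistence step, i.e.\ ruling out that a minimal invariant set of a nearby system could creep into $B_\delta(M)$ while failing to approximate $M$ from inside — one must really use (H6), since without the uniform ``contains a ball of size $\eps$'' hypothesis a thin filament of a large minimal set could enter $B_\delta(M)$ without its being close to all of $M$, and getting the quantifiers on $T$, $\eps$, $\delta$ and the continuity modulus from (H5) to line up is the delicate bookkeeping. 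A secondary subtlety is that the nearby minimal sets $N_k$ need not be contained in $B_\delta(M)$; only their trace $N_k\cap B_\delta(M)$ appears in the statement, so one must verify that the points of $M$ one produces in $N_k$ indeed lie in $B_\delta(M)$, which is automatic as they converge to points of $M\subset B_\delta(M)$ and $B_\delta(M)$ is open.
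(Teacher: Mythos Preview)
Your dichotomy is set up along the right lines, and the observation that mutual exclusivity of (i) and (ii) is immediate is fine. The real gap is in your persistence step for case~(i). You write: fix $x\in M$, choose $z\in M$ with $x\in\Phi_{\lambda_\infty}(T,z)$, and then ``pick $w_k\in N_k$ close to $z$''. But finding $w_k\in N_k$ close to an \emph{arbitrary} $z\in M$ is precisely the statement $M\subset\liminf_{k\to\infty}N_k$ that you are trying to prove. All you know at that stage is that $N_k$ meets $B_\delta(M)$ somewhere; you have no control over \emph{where}, so you cannot place $w_k$ near your chosen $z$. The argument as written is circular, and the ball-of-size-$\eps$ property alone does not break the circle: it tells you $N_k$ is fat near the points it already contains, not that it reaches every point of $M$.

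The paper closes this gap by a different mechanism. It first sets up the dichotomy using intersection with $M$ itself rather than $B_\delta(M)$; the work then shifts to case~(ii), where one must argue (using (H6) and continuity in $\lambda$) that a minimal set for $\Phi_{\lambda_{n_k}}$ coming $\delta$-close to $M$ would, after time $T$, actually meet $\inner M$. In case~(i), after a pigeonhole step (cover $M$ by finitely many $\eps/2$-balls and pass to a subsequence so that all $N_k$ contain a common centre), the key move is \emph{not} to approximate $M$ point by point but to show that the set $L:=\liminf_{k\to\infty}N_k$ satisfies $\Phi_{\lambda_\infty}(t,L)\subset L$ for all $t\ge 0$: if some $\tilde x\in L$ had a forward image $\xi\in\Phi_{\lambda_\infty}(\tau,\tilde x)\setminus L$, one uses openness of the complement of $L$ together with (H5) and the interior-ball property to derive a contradiction with the invariance of the $N_k$. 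Since $L$ is nonempty (by the common-point step) and intersects $M$, it contains a minimal $\Phi_{\lambda_\infty}$-invariant set, which by minimality of $M$ must be $M$ itself. This forward-invariance-of-the-liminf argument is the idea your sketch is missing.
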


\begin{proof}
  Let $(\lambda_n)_{n\in\N}$ be a sequence with $\lambda_n\to \lambda_\infty$ as $n\to\infty$. Define the sequence $(c_n)_{n\in\N}$ by
  \begin{displaymath}
    c_n:= \left\{\begin{array}{rcc} 1 & : & \cM_{\lambda_n}\cap M_{\lambda_\infty}\not=\es\\
    2 & : & \cM_{\lambda_n}\cap M_{\lambda_\infty}=\es
    \end{array}\right. \fa n\in \N\,,
  \end{displaymath}
  and choose $\tilde \delta>0$ such that $B_{\tilde \delta}(M_{\lambda_\infty}) \cap \cM_{\lambda_\infty} = M_{\lambda_\infty}$. Since $\set{1,2}$ is finite, there exists
  a constant subsequence $(c_{n_k})_{k\in\N}$.

  If $c_{n_k}\equiv 2$, assume to the contrary that for all $k\in\N$, there exist $m\ge k$ and $a_k\in \cM_{\lambda_{n_m}} \cap
  B_{1/k}(M_{\lambda_\infty})$. The sequence $(a_k)_{k\in\N}$ has a convergent subsequence with limit $a_\infty \in M_{\lambda_\infty}$. Now $\Phi_{\lambda_\infty}(T,a_\infty)\subset M_{\lambda_\infty}$, and the continuity of $\Phi$ implies that there exists a $\gamma>0$ such that $\Phi_{\lambda_\infty}(T,x) \subset B_{\eps/4}(\Phi_{\lambda_\infty}(T,a_\infty))$ for all $x \in B_\gamma(a_\infty)$.  (H4) then implies the existence of $N>0$ such that for all $m>N$, we have $\Phi_{\lambda_{n_m}}(T,x) \subset B_{\eps/4}(\Phi_{\lambda_\infty}(T,x)) \subset B_{\eps/2}(\Phi_{\lambda_\infty}(T,a_\infty)) \subset B_{\eps/2}(M_{\lambda_\infty})$ for all $m>N$ und $x\in B_\gamma(a_\infty)$. Since $\Phi_{\lambda_{n_m}}(T,x)$ contains an $\eps$-ball and is within the $\eps/2$-neighbourhood of $M_{\lambda_\infty}$, one gets $\Phi_{\lambda_{n_m}}(T,x) \cap M_{\lambda_\infty}\not= \emptyset$. This is a contradiction to the definition of the sequence $c_{n_k}$ and this proves that there exists $\delta \in (0,\tilde\delta)$ with $\cM_{\lambda_{n_k}}\cap B_\delta(M_{\lambda_\infty})=\es$ whenever $\frac{1}{k} < \delta$. Hence, (ii) holds.

  If $c_{n_k}\equiv 1$, define $\delta:= \tilde \delta$. Choose minimal $\Phi_{\lambda_{n_k}}$-invariant sets $M_{\lambda_{n_k}}\subset\cM_{\lambda_{n_k}}$ such that $M_{\lambda_{n_k}}\cap M_{\lambda_\infty}\not=\es$ for $k\in\N$. Since $\Phi_{\lambda_\infty}(T, M_{\lambda_{n_k}}\cap M_{\lambda_\infty}) \subset M_{\lambda_\infty}$, (H4) implies that there exists a $k_0\in \N$ such that
  \begin{equation}\label{rel_3}
    \Phi_{\lambda_{n_k}}(T, M_{\lambda_{n_k}}\cap M_{\lambda_\infty})\subset B_{\eps/4}(M_{\lambda_\infty}) \cap M_{\lambda_{n_k}}\fa k\ge k_0\,.
  \end{equation}
  Let $B_{\eps/2}(d_1),\dots, B_{\eps/2}(d_r)$ with $d_1,\dots,d_r\in M_{\lambda_\infty}$ be finitely many $\eps/2$-balls covering the compact set $\overline{B_{\eps/4}(M_{\lambda_\infty})}$.  Because of (H5) and \eqref{rel_3}, each of the sets $M_{\lambda_{n_k}}$
  contains (at least) one of the points $d_1,\dots, d_r$. We can thus put the sets $M_{\lambda_{n_k}}$ into $r$ different systems of sets $\mathcal C_i$ such that $\bigcap_{M\in \mathcal C_i} M\supset \set{d_i}$ for $i\in\set{1,\dots,r}$. We show now that the asserted limit
  relation in (i) holds when restricting to a subsequence corresponding to each category, from which the assertion follows, since there are only
  finitely many categories. For simplicity, assume now that there is only one category. It will be shown now that $\liminf_{k\to\infty}
  M_{\lambda_{n_k}}$ cannot be left in forward time for $\lambda=\lambda_\infty$, i.e.~fulfills the conditions of \propref{prop_3}. Since this set is nonempty and intersects
  $M_{\lambda_\infty}$, minimality of $M_{\lambda_\infty}$ then implies the assertion. Assume to the contrary that there exists an $\tilde x\in
  \liminf_{k\to\infty} M_{\lambda_{n_k}}$ such that $\Phi_{\lambda_\infty}(\tau,\tilde x)\sm \liminf_{k\to\infty} M_{\lambda_{n_k}} \not= \es$ for
  some $\tau>0$, i.e.~there exists a $\xi\in \Phi_{\lambda_\infty}(\tau,\tilde x)$ such that $ \xi \notin \liminf_{k\to\infty} M_{\lambda_{n_k}}$.
  %We
  %can choose $\xi$ to be even in the interior
  %$\inner \Phi_{\lambda_\infty}(\tau,\tilde x)$,
  %which is possible due to (H6) and since $\liminf_{k\to\infty}
  %M_{\lambda_{n_k}}$ is closed. Choose $\zeta>0$ such that
  %$B_\zeta(\xi)\subset \Phi_{\lambda_\infty}(\tau,\tilde x)$  and
  %\begin{equation}\label{rel_1}
  %  B_\zeta(\xi)\cap \liminf_{k\to\infty}  M_{\lambda_{n_k}} = \es\,.
  %\end{equation}
  (H4) implies the existence of a sequences $(x_{n_k})_{k\in\N}$ (converging to $\tilde x$) and $(y_{n_k})_{k\in\N}$ (converging to $\xi$) such that $x_{n_k}\in M_{\lambda_{n_k}}$ and $y_{n_k}\in \Phi_{\lambda_{n_k}}(\tau,x_{n_k})\subset M_{\lambda_{n_k}}$. Hence, $\xi\in \liminf_{k\to\infty}  M_{\lambda_{n_k}}$, which is a contradiction and finishes the proof of this theorem.
\end{proof}

The above theorem asserts that discontinuous changes in minimal invariant sets occur either as explosions or as instantaneous appearances. We are led to address the question if a continuous merging process of two minimal invariant sets is possible (note that this is not ruled out by (i) of
\theoref{theo_1}). The following proposition shows that the answer to this question is negative.

\begin{proposition}\label{prop_4}
  Let $(\Phi_\lambda)_{\lambda\in\Lambda}$ be a family of set-valued dynamical systems fulfilling (H1)--(H5), and let $M_\lambda^1$ and   $M_\lambda^2$ be two different minimal $\Phi_\lambda$-invariant sets. Then for all $\lambda^*\in \Lambda$, one has
  \begin{displaymath}
    \liminf_{\lambda\to\lambda^*} \inf_{(x,y)\in M_\lambda^1\tm M_\lambda^2} d(x,y) > 0\,,
  \end{displaymath}
  i.e.~the sets $M_\lambda^1$ and $M_\lambda^2$ cannot collide under variation of $\lambda$.
\end{proposition}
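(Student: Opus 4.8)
The plan is to argue by contradiction, deriving that two supposedly distinct minimal invariant sets would have to intersect --- which is impossible, since minimal $\Phi_\lambda$-invariant sets are pairwise disjoint. So suppose the conclusion fails at some $\lambda^*\in\Lambda$. Then there is a sequence $\lambda_n\to\lambda^*$ with $g_n:=\inf_{(x,y)\in M_{\lambda_n}^1\tm M_{\lambda_n}^2} d(x,y)\to 0$, and by compactness of $M_{\lambda_n}^1$ and $M_{\lambda_n}^2$ this infimum is attained at points $x_n\in M_{\lambda_n}^1$, $y_n\in M_{\lambda_n}^2$ with $d(x_n,y_n)=g_n\to 0$. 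Passing to a subsequence, $x_n\to z$ for some $z\in X$ by compactness of $X$, and then $y_n\to z$ as well.

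The first step --- and the only place where continuity of $\Phi_\lambda$, rather than mere upper semi-continuity, is used --- is to show that $\delta_n:=h\opintb{\Phi_{\lambda_n}(T,x_n),\Phi_{\lambda_n}(T,y_n)}\to 0$, where $T>0$ is the constant supplied by (H6). Here one writes $h\opintb{\Phi_{\lambda_n}(T,x_n),\Phi_{\lambda^*}(T,z)}\le h\opintb{\Phi_{\lambda_n}(T,x_n),\Phi_{\lambda^*}(T,x_n)}+h\opintb{\Phi_{\lambda^*}(T,x_n),\Phi_{\lambda^*}(T,z)}$: the first term tends to $0$ by (H5) (uniformity in the base point), the second by continuity of $\Phi_{\lambda^*}(T,\cdot)$ together with $x_n\to z$. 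The same estimate applies with $y_n$ in place of $x_n$, and the triangle inequality for $h$ then gives $\delta_n\to 0$.

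The second step uses (H6) to produce the intersection point. By (H6) there is, for each $n$, a point $a_n\in X$ with $B_\eps(a_n)\subset\Phi_{\lambda_n}(T,x_n)$, where $\eps>0$ is uniform. Since $\dist\opintb{\Phi_{\lambda_n}(T,x_n),\Phi_{\lambda_n}(T,y_n)}\le\delta_n$ and $\Phi_{\lambda_n}(T,y_n)$ is compact, one can pick $a_n'\in\Phi_{\lambda_n}(T,y_n)$ with $d(a_n,a_n')\le\delta_n$. For $n$ large enough that $\delta_n<\eps$ we then have $B_{\eps-\delta_n}(a_n')\subset B_\eps(a_n)\subset\Phi_{\lambda_n}(T,x_n)\subset M_{\lambda_n}^1$, the last inclusion by invariance of $M_{\lambda_n}^1$ (recall $x_n\in M_{\lambda_n}^1$); in particular $a_n'\in M_{\lambda_n}^1$. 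On the other hand $a_n'\in\Phi_{\lambda_n}(T,y_n)\subset M_{\lambda_n}^2$ by invariance of $M_{\lambda_n}^2$. Hence $a_n'\in M_{\lambda_n}^1\cap M_{\lambda_n}^2$, contradicting disjointness of distinct minimal invariant sets, and the proposition follows.

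I expect the main obstacle to be concentrated entirely in the first step: for a merely upper semi-continuous family one would only obtain $\limsup_{n\to\infty}\Phi_{\lambda_n}(T,x_n)\subset\Phi_{\lambda^*}(T,z)$ (and likewise for $y_n$), which is too weak to force the two image sets Hausdorff-close --- so that both the argument and the statement genuinely rely on continuity. Everything after $\delta_n\to 0$ is routine metric-space estimation.
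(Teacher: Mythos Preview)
Your proof is correct and follows essentially the same route as the paper's: argue by contradiction, pick nearby points $x_n\in M^1_{\lambda_n}$, $y_n\in M^2_{\lambda_n}$ converging to a common limit, use continuity in $(\lambda,x)$ together with the uniform $\eps$-ball from (H6) to force the two minimal sets to intersect, and conclude from disjointness of distinct minimal invariant sets. The paper's proof is much terser (it phrases the key step as ``$\Phi_{\lambda^*}(t,x^*)$ intersects the interior of both $M^1_{\lambda_n}$ and $M^2_{\lambda_n}$''), whereas you work directly at the level of $\Phi_{\lambda_n}$ and make the role of continuity in $x$ explicit via the triangle-inequality estimate through $\Phi_{\lambda^*}(T,z)$; this is arguably cleaner, and your closing remark correctly isolates why mere upper semi-continuity would not suffice.
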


\begin{proof}
  Suppose the contrary, which means that there exist an $x^*\in X$ and a sequence $\lambda_n\to\lambda^*$ as $n\to\infty$ with
  \begin{displaymath}
    \lim_{n\to\infty} \dist(x^*, M^1_{\lambda_n}) = 0 \qandq \lim_{n\to\infty} \dist(x^*, M^2_{\lambda_n}) = 0\,.
  \end{displaymath}
  Due to (H4) and (H5), for $t>0$, the set $\Phi_{\lambda^*}(t, x^*)$ intersects the interior of both $M^1_{\lambda_n}$ and $M^2_{\lambda_n}$ when
  $n$ is large enough. This, however, contradicts the fact that $M^1_{\lambda_n}$ and $M^2_{\lambda_n}$ are $\Phi$-invariant and finishes the
  proof of this proposition.
\end{proof}

\section{A necessary condition for bifurcation}\label{sec_3}

Consider a family $(\Phi_\lambda)_{\lambda\in\Lambda}$ of set-valued dynamical systems $\Phi_\lambda:\T\tm X\to \cK(X)$, where $(\Lambda, d_\Lambda)$
is a metric space, and suppose that (H1)--(H5) hold.

Recall the definition of a topological bifurcation (\defref{def_1}) and the fact that $\cM_\lambda$ denotes the union of all minimal
$\Phi_\lambda$-invariant sets. As a direct consequence of \theoref{theo_1} and \propref{prop_4}, a
topological bifurcation of $\cM_\lambda$ is characterised by a minimal $\Phi_{\lambda_\infty}$-invariant set $M_{\lambda_\infty}$, a sequence
$\lambda_n\to \lambda_\infty$ as $n\to\infty$ and $\delta>0$ such that
\begin{equation}\label{rel_2}
  M_{\lambda_\infty} \subsetneq \liminf_{n\to\infty} \opintb{\cM_{\lambda_n}\cap B_\delta(M_{\lambda_\infty})} \qorq
  \es = \limsup_{n\to\infty}\opintb{\cM_{\lambda_n} \cap B_\delta(M_{\lambda_\infty})}\,.
\end{equation}
The following theorem provides a necessary condition for a topological bifurcation of minimal invariant sets involving the dual
$M_{\lambda_\infty}^*$ of $M_{\lambda_\infty}$ as introduced in \secref{sec_2}.

\begin{theorem}[Necessary condition for bifurcation]\label{theo_2}
  Let $(\Phi_\lambda)_{\lambda\in\Lambda}$ be a family of set-valued dynamical systems fulfilling (H1)--(H5), and assume that
  $(\Phi_\lambda)_{\lambda\in\Lambda}$ admits a topological bifurcation such that \eqref{rel_2} holds for a minimal invariant set
  $M_{\lambda_\infty}$. Then $M_{\lambda_\infty}^*$ has nonempty intersection with $M_{\lambda_\infty}$.
\end{theorem}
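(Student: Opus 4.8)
The plan is to argue by contradiction: suppose $M_{\lambda_\infty}^* \cap M_{\lambda_\infty} = \es$. Since $M_{\lambda_\infty}^* = X \sm \cA_-(M_{\lambda_\infty})$, this means $M_{\lambda_\infty} \subset \cA_-(M_{\lambda_\infty})$, i.e.\ the minimal invariant set $M_{\lambda_\infty}$ lies in the ``robust'' part of its own domain of attraction. Using the alternative characterisation of $\cA_-(M_{\lambda_\infty})$ from the Lemma following \propref{prop_2}, every point of $M_{\lambda_\infty}$ has, for each $T\ge 0$, a neighbourhood $V$ of $\Phi_{\lambda_\infty}(T,x)$ which is attracted to $M_{\lambda_\infty}$. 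By compactness of $M_{\lambda_\infty}$ (take $T=0$ and extract a finite subcover), this upgrades to: there is $\eta>0$ with $\overline{B_\eta(M_{\lambda_\infty})} \subset \cA_-(M_{\lambda_\infty})$, hence $\lim_{t\to\infty}\dist(\Phi_{\lambda_\infty}(t,\overline{B_\eta(M_{\lambda_\infty})}), M_{\lambda_\infty}) = 0$. In particular $M_{\lambda_\infty}$ is an attractor with a uniform, robust basin: one can fix $T_0>0$ and $0<\eta'<\eta$ with $\Phi_{\lambda_\infty}(T_0, \overline{B_\eta(M_{\lambda_\infty})}) \subset B_{\eta'}(M_{\lambda_\infty})$.

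Next I would transfer this robustness to nearby parameters using (H5): uniform continuity of $(\lambda,t)\mapsto \Phi_\lambda(t,x)$ in $x$ gives, for $\lambda$ close enough to $\lambda_\infty$, that $\Phi_\lambda(T_0, \overline{B_{\eta/2}(M_{\lambda_\infty})}) \subset B_{\eta/2}(M_{\lambda_\infty})$ (shrinking constants as needed so that the overshoot from changing $\lambda$ is absorbed). Thus $\overline{B_{\eta/2}(M_{\lambda_\infty})}$ is forward-invariant under $\Phi_\lambda(T_0,\cdot)$, and since $\Phi_\lambda(t, \overline{B_{\eta/2}(M_{\lambda_\infty})})$ stays within a fixed bounded neighbourhood for $0\le t\le T_0$ (again by (H5), after possibly enlarging to $\overline{B_{\eta}(M_{\lambda_\infty})}$), the set $N_\lambda := \overline{\bigcup_{t\ge 0}\Phi_\lambda(t, \overline{B_{\eta/2}(M_{\lambda_\infty})})}$ is a compact set with $\Phi_\lambda(t,N_\lambda)\subset N_\lambda$ for all $t\ge 0$ and $N_\lambda \subset B_\eta(M_{\lambda_\infty}) \subset B_\delta(M_{\lambda_\infty})$. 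By \propref{prop_1}, $N_\lambda$ contains a minimal $\Phi_\lambda$-invariant set, so $\cM_\lambda \cap B_\delta(M_{\lambda_\infty}) \neq \es$ for all $\lambda$ near $\lambda_\infty$; this already kills the instantaneous-appearance alternative in \eqref{rel_2}.

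To also rule out the lower semi-continuous explosion, I would show that the minimal invariant set(s) inside $N_\lambda$ cannot escape a small neighbourhood of $M_{\lambda_\infty}$. The key point is that the attraction estimate is uniform: choosing $T_0$ large enough that $\Phi_{\lambda_\infty}(T_0,\overline{B_\eta(M_{\lambda_\infty})})\subset B_{\eta'}(M_{\lambda_\infty})$ with $\eta'$ as small as we like, and then using (H5) to get the same with $\lambda$ in place of $\lambda_\infty$ and $2\eta'$ in place of $\eta'$, forces $N_\lambda \subset \overline{B_{2\eta'}(M_{\lambda_\infty})}$ up to the finite-time transient, which can itself be controlled: $\Phi_\lambda(t, \overline{B_{\eta/2}(M_{\lambda_\infty})}) \subset B_{\rho}(M_{\lambda_\infty})$ for all $t\ge 0$, where $\rho = \rho(\lambda) \to 0$ as $\lambda\to\lambda_\infty$ — this last claim follows by combining the uniform attraction on the fixed basin with uniform continuity in $\lambda$ over the compact time interval $[0,T_0]$ and the semigroup property. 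Consequently any minimal $\Phi_\lambda$-invariant set meeting $B_\delta(M_{\lambda_\infty})$ is contained in $B_{\rho(\lambda)}(M_{\lambda_\infty})$, so $\limsup_{n\to\infty}(\cM_{\lambda_n}\cap B_\delta(M_{\lambda_\infty})) \subset M_{\lambda_\infty}$, contradicting the strict inclusion in the first alternative of \eqref{rel_2}. Either way \eqref{rel_2} fails, contradicting the hypothesis, so $M_{\lambda_\infty}^*\cap M_{\lambda_\infty}\neq\es$.

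The main obstacle I anticipate is making the uniform-in-$\lambda$ attraction estimate fully rigorous: (H5) only gives continuity of $\Phi_\lambda(t,\cdot)$ jointly in $(\lambda,t)$ uniformly in $x$, so one must carefully interleave (a) the genuinely uniform contraction-in-finitely-many-steps at $\lambda_\infty$ coming from $\overline{B_\eta(M_{\lambda_\infty})}\subset\cA_-(M_{\lambda_\infty})$, (b) the perturbation of a single time-$T_0$ map, and (c) iteration of the perturbed map, controlling how the small errors from (b) accumulate under iteration — here one uses that the $\lambda_\infty$-dynamics genuinely pulls points strictly inward on $\overline{B_\eta(M_{\lambda_\infty})}$, so a sufficiently small $\lambda$-perturbation still pulls inward, and the forward orbit cannot leak out. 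The bookkeeping of the finite-time transient (points entering $N_\lambda$ travel through $\Phi_\lambda([0,T_0],\cdot)$ before being trapped in the small ball) is the fiddly part, but it is handled by the same uniform-continuity input applied on the compact parameter-time set.
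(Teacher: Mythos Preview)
Your plan is correct and follows essentially the same route as the paper: argue by contradiction that $M_{\lambda_\infty}\subset\cA_-(M_{\lambda_\infty})$, deduce that $M_{\lambda_\infty}$ is an attractor with a robust absorbing neighbourhood, perturb this neighbourhood to nearby $\lambda$ via (H5), and conclude that minimal $\Phi_{\lambda_n}$-invariant sets exist near $M_{\lambda_\infty}$ and converge to it, contradicting \eqref{rel_2}.

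The paper compresses two of your steps by citing external results. First, instead of building the trap by hand from a contraction $\Phi_{\lambda_\infty}(T_0,\overline{B_\eta(M_{\lambda_\infty})})\subset B_{\eta'}(M_{\lambda_\infty})$, it invokes \cite[Theorem~3, p.~43]{Akin_93_1} to obtain, for every $\delta>0$, a compact absorbing set $B$ with $M_{\lambda_\infty}\subset B\subset B_\delta(M_{\lambda_\infty})$ and $\Phi_{\lambda_\infty}(t,B)\subset\inner B$; this sidesteps the bookkeeping of constants that you flag as the fiddly part. Second, for the convergence of the nearby minimal invariant sets to $M_{\lambda_\infty}$ (your Step~7), the paper simply lets $\delta\to 0$ and appeals to \theoref{theo_1} rather than redoing the uniform attraction estimate. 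Your more explicit argument via the $\omega$-limit of a point in $M_n\cap B$ (forcing $M_n\subset B$ by minimality) is what makes the paper's terse ``converges to $M_{\lambda_\infty}$'' rigorous, and is worth keeping if you want a self-contained write-up.
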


\begin{proof}
  Consider the sequence $\lambda_n\to\lambda_\infty$ as defined before the statement of the theorem. Assume to the contrary that there exists a
  $\gamma>0$ such that $B_\gamma(M_{\lambda_\infty})\subset \cA_-(M_{\lambda_\infty})$. Then for each $\delta>0$, there exists a compact absorbing
  set $B$ such that $M_{\lambda_\infty}\subset B\subset B_{\delta}(M_{\lambda_\infty})$, that is, $\Phi_{\lambda_\infty}(t,B)\subset \inner B$ for
  $t>0$. %\cite[Theorem~3, p.~43]{Akin_93_1}.
  Due to continuous dependence on $\lambda$, there exists an $n_0\in\N$ such that
  $\Phi_{\lambda_n}(t,B)\subset \inner B$ for all $n\ge n_0$ and $t>0$. This means that there exists a minimal $\Phi_{\lambda_n}$-invariant set in
  $B$ for all $n\ge n_0$. Note that $n_0$ depends on $\delta$, and in the limit $\delta \to 0$, this minimal invariant set converges to
  $M_{\lambda_\infty}$, because of \theoref{theo_1}. Hence, there is no bifurcation, which shows that $X\sm \cA_-(M_{\lambda_\infty})\cap
  M_{\lambda_\infty} \not= \es$.
\end{proof}

\providecommand{\bysame}{\leavevmode\hbox to3em{\hrulefill}\thinspace}
\providecommand{\MR}{\relax\ifhmode\unskip\space\fi MR }
% \MRhref is called by the amsart/book/proc definition of \MR.
\providecommand{\MRhref}[2]{%
  \href{http://www.ams.org/mathscinet-getitem?mr=#1}{#2}
}
\providecommand{\href}[2]{#2}

\end{document}